\newtheorem{theorem}{Theorem}
\newtheorem{proposition}[theorem]{Proposition}
\newtheorem{corollary}[theorem]{Corollary}
\theoremstyle{definition}
\newtheorem{example}[theorem]{Example}
\theoremstyle{remark}
\newtheorem{remark}[theorem]{Remark}
\numberwithin{equation}{section}
\numberwithin{theorem}{section}
\newcommand{\CC}{\mathbb{C}}
\newcommand{\Mn}{\mathbb{M}_n}
\newcommand{\rank}{\textnormal{rank}}
\newcommand{\range}{\textnormal{range}}
\newcommand{\tr}{\textnormal{tr}}
\begin{document}

\title[$AB$ versus $BA$]{On the similarity of $AB$ and $BA$ for \\ normal and other matrices}

\author{Stephan Ramon Garcia}
	\address{Department of Mathematics, Pomona College, 610 N. College Ave., Claremont, CA 91711} 
	\email{stephan.garcia@pomona.edu}
	\urladdr{http://pages.pomona.edu/~sg064747}
	
	\author{David Sherman}
	\address{Department of Mathematics, University of Virginia, P.O. Box 400137, Charlottesville, VA 22904-4137}
	\email{dsherman@virginia.edu}
	\urladdr{http://people.virginia.edu/~des5e}
	
	\author{Gary Weiss}
	\address{Department of Mathematical Sciences, 4199 French Hall West, University of Cincinnati, 2815 Commons Way, Cincinnati, OH 45221-0025}
	\email{gary.weiss@uc.edu}
	\urladdr{http://math.uc.edu/~weiss}

\subjclass[2010]{15A03, 15A18}
\keywords{matrix similarity}
\date{\today}
\thanks{The authors would like to acknowledge the support of NSF grants DMS-1265973 and DMS-1201454, and of Simons Collaboration Grant for Mathematicians 245014.}

\begin{abstract}
It is known that $AB$ and $BA$ are similar when $A$ and $B$ are Hermitian matrices.  In this note we answer a question of F. Zhang by demonstrating that similarity can fail if $A$ is Hermitian and $B$ is normal.  Perhaps surprisingly, similarity does hold when $A$ is positive semidefinite and $B$ is normal.
\end{abstract}

\maketitle

\section{Introduction}
Throughout this paper $A$ and $B$ denote complex square matrices of the same size.  We pursue the following question: when is $AB$ similar to $BA$?

This does not always happen.  But it does when $A$ and $B$ are Hermitian or when either is invertible; we seek other assumptions that imply similarity.  For instance, it was asked by F. Zhang (personal communication) whether it suffices for $A$ and $B$ to be merely normal.  We show here that similarity does not follow even when $A$ is Hermitian and $B$ is normal (Example \ref{T:ctrex}), although it does if $A$ is further assumed to be positive semidefinite (Theorem \ref{T:pos}).  We also show that similarity, or unitary similarity, follows under various hypotheses when one or both matrices have low rank or size, and we give minimal counterexamples showing that our conditions are sharp.

Similarity will be denoted by $\sim$ and unitary similarity by $\sim_u$.

We thank Fuzhen Zhang for bringing this problem to our attention, and Roger Horn for suggesting significant improvements to Section 6.

\section{Ranks of powers of a matrix}

We define the \textit{rank sequence} of $A$ to be $\{\rank (A^j)\}_{j=0}^\infty$ (with $A^0 = I$).  Which sequences of nonnegative integers occur as the rank sequence of a matrix?

Since rank is unchanged by similarity, we may as well consider the Jordan form of $A$.  Jordan blocks for nonzero eigenvalues are invertible, and the ranks of powers of a Jordan block for a zero eigenvalue drop by one until reaching zero.  So the drop from $\rank (A^j)$ to $\rank (A^{j+1})$ is precisely the number of Jordan blocks for zero of size at least $j+1$.  The size of these drops is then nonincreasing in $j$, leading to the conclusion that rank sequences are nonincreasing and convex.  We use this fact in Section 5.  (Actually it is a characterization of rank sequences, as any nonincreasing convex sequence of nonnegative integers is the rank sequence of a matrix whose Jordan blocks satisfy the criterion just mentioned.  Details are left to the interested reader.)

The rank sequence of $A$ carries the same information as the Jordan structure of $A$ for the zero eigenvalue, which is more commonly encoded in the Segre or Weyr characteristic (see \cite{LS}), but rank sequences are more natural for this paper.

\section{Known facts}

If one of the matrices is invertible, then $AB \sim BA$ (conjugate by the invertible one).  But even for $2 \times 2$ matrices, $AB$ need not be similar to $BA$:  consider
$$A = \left[ \begin{matrix}  0 & 1 \\ 0 & 0 \end{matrix} \right], \qquad B = \left[ \begin{matrix}  0 & 0 \\ 0 & 1 \end{matrix} \right].$$

It is known that for square matrices in general, the invertible Jordan blocks of $AB$ and $BA$ are the same (\cite[Theorem 3.2.11.1]{HJ}, see also \cite{F} for comparison of the Jordan structures of $AB$ and $BA$ at 0).  As a consequence we have

\begin{proposition} \label{T:rankseq}
${}$
\begin{enumerate}
\item[(i)] The rank sequences of $AB$ and $BA$ eventually become the same constant (the sum of the ranks of their invertible Jordan blocks).
\item[(ii)] $AB$ and $BA$ are similar if and only if they have the same rank sequences.
\end{enumerate}
\end{proposition}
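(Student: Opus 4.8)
The plan is to leverage the two ingredients just assembled: the structural description of rank sequences from Section 2, and the cited fact that $AB$ and $BA$ share the same invertible Jordan blocks. The guiding principle I would rely on is that for any square matrix $M$ and any $j \geq 0$, the difference $\rank(M^j) - \rank(M^{j+1})$ equals the number of Jordan blocks of $M$ for the eigenvalue $0$ having size at least $j+1$. Hence the rank sequence of $M$ both determines and is determined by the entire Jordan structure of $M$ at $0$, while carrying no information about the nonzero eigenvalues beyond their total multiplicity.

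For (i), I would first observe that for $j \geq 1$ each invertible Jordan block of size $k$ contributes exactly $k$ to $\rank(M^j)$, whereas each zero block of size $m$ contributes $\max(m-j,0)$, which vanishes once $j$ is large. Thus the rank sequence of $M$ stabilizes at the total size of the invertible blocks, that is, the sum of their ranks. Since $AB$ and $BA$ have identical invertible Jordan blocks, their rank sequences stabilize at the same constant, giving (i).

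For (ii), the forward implication is immediate: if $AB \sim BA$ then $(AB)^j \sim (BA)^j$ for every $j$ via the same conjugating matrix, so corresponding powers have equal rank. For the converse I would argue through the Jordan canonical form, using that two matrices are similar precisely when their Jordan forms agree up to a reordering of blocks. The invertible parts of the Jordan forms of $AB$ and $BA$ already coincide by the cited result, so it remains only to match their zero-eigenvalue structures. But equality of the rank sequences forces equality of all the differences $\rank(M^j) - \rank(M^{j+1})$, and by the counting principle above this pins down the number of zero blocks of each size identically for $AB$ and $BA$. The two Jordan forms then agree in full, yielding $AB \sim BA$.

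The step requiring the most care is recognizing that the rank sequence alone governs only the behavior at the eigenvalue $0$; it is silent about how the nonzero spectrum is distributed into blocks. Consequently the cited equality of invertible Jordan blocks is not an optional convenience but the essential input that makes the nonzero part ``free,'' reducing the whole similarity question to the zero part, which the rank data then controls exactly. For a generic pair of matrices, equal rank sequences would be far from sufficient for similarity; the result is special to the pair $AB$, $BA$.
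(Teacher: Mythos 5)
Your proof is correct and follows essentially the same route as the paper: it combines the Section 2 observation that the differences $\rank(M^j)-\rank(M^{j+1})$ count the zero-eigenvalue Jordan blocks of size at least $j+1$ with the cited fact that $AB$ and $BA$ have identical invertible Jordan blocks. The paper leaves these details implicit (``As a consequence we have\dots''), and you have simply written them out in the intended way.
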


Here are some other useful known facts.

\begin{proposition} \label{T:rank}
${}$
\begin{enumerate}
\item[(i)] If $\rank (AB) = \rank (BA) = \rank (A)$, then $AB \sim BA$.
\item[(ii)] If $A$ and $B$ are normal, then $\rank(AB) = \rank(BA)$.
\item[(iii)] If $A$ and $B$ are Hermitian, then $AB \sim BA$.
\end{enumerate}
\end{proposition}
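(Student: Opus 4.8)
The plan is to handle all three parts through rank sequences, invoking Proposition~\ref{T:rankseq}(ii), which reduces similarity of $AB$ and $BA$ to equality of their rank sequences $\{\rank((AB)^j)\}_j$ and $\{\rank((BA)^j)\}_j$. I expect (iii) to be immediate and (ii) to be a short kernel computation, with (i) carrying the real weight. For (iii): since $A$ and $B$ are Hermitian, $(AB)^* = B^*A^* = BA$, so $(BA)^j = ((AB)^j)^*$ for every $j$; as rank is preserved under adjoints, the two rank sequences coincide and Proposition~\ref{T:rankseq}(ii) yields $AB \sim BA$.

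For (ii), the plan is to use that normality of $A$ gives $\ker A = \ker A^*$, whence $\ker(AB) = \{x : Bx \in \ker A\} = \{x : Bx \in \ker A^*\} = \ker(A^*B)$ and therefore $\rank(AB) = \rank(A^*B)$. Symmetrically, normality of $B$ gives $\rank(BA) = \rank(B^*A)$. Since $(A^*B)^* = B^*A$ and rank is adjoint-invariant, $\rank(A^*B) = \rank(B^*A)$, and chaining the three equalities gives $\rank(AB) = \rank(BA)$.

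Part (i) is the substantial one. By Proposition~\ref{T:rankseq}(ii) it suffices to prove $r_j := \rank((AB)^j)$ equals $s_j := \rank((BA)^j)$ for all $j$; they agree at $j=0$. First I would extract two structural consequences of the hypotheses: $\range(AB) = \range A$ (from $\rank(AB) = \rank A$, as $\range(AB) \subseteq \range A$) and $\ker(BA) = \ker A$ (from $\rank(BA) = \rank A$, as $\ker A \subseteq \ker(BA)$). Then I would exploit the intertwining identity $A\,(BA)^j = (AB)^j A$. It shows that $A$ carries $\range((BA)^j)$ onto $(AB)^j(\range A) = \range((AB)^{j+1})$, the last equality using $\range(AB) = \range A$. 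Applying rank--nullity to this restriction of $A$, the resulting rank drop is $\dim(\ker A \cap \range((BA)^j))$, which by $\ker(BA) = \ker A$ equals the drop $s_j - s_{j+1}$ of the rank sequence of $BA$. Hence $r_{j+1} = s_j - (s_j - s_{j+1}) = s_{j+1}$, closing the induction.

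The crux, and the step I expect to be the main obstacle, is this identification in (i): one must see that the two hypotheses play complementary roles, with $\rank(AB) = \rank A$ pinning down the image of the intertwiner $A$ on $\range((BA)^j)$ and $\rank(BA) = \rank A$ matching its kernel contribution to the rank drop of $BA$. Neither equality alone suffices.
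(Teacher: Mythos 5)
Your proposal is correct. Parts (ii) and (iii) are essentially the paper's own arguments: for (iii) the paper likewise observes $((AB)^j)^* = (BA)^j$ and invokes Proposition~\ref{T:rankseq}(ii); for (ii) the paper runs a chain of rank equalities using $\rank(T^*T)=\rank(T)=\rank(T^*)$, whereas you use $\ker T = \ker T^*$ for normal $T$ to get $\ker(AB)=\ker(A^*B)$ and $\ker(BA)=\ker(B^*A)$ and then take one adjoint --- the same normality input packaged slightly differently, and equally valid. The real divergence is in (i), which the paper does not prove at all but simply cites from \cite[Exercise 3.2.P20b]{HJ}. Your argument there is a genuine, self-contained proof and it checks out: from $\rank(AB)=\rank(A)$ and $\range(AB)\subseteq\range(A)$ you get $\range(AB)=\range(A)$; from $\rank(BA)=\rank(A)$ and $\ker(A)\subseteq\ker(BA)$ you get $\ker(BA)=\ker(A)$; the identity $A(BA)^j=(AB)^jA$ shows $A$ maps $\range((BA)^j)$ onto $(AB)^j(\range(A))=(AB)^j(\range(AB))=\range((AB)^{j+1})$, and rank--nullity applied to this restriction gives $s_j = r_{j+1} + \dim(\ker(A)\cap\range((BA)^j)) = r_{j+1} + \dim(\ker(BA)\cap\range((BA)^j)) = r_{j+1} + (s_j - s_{j+1})$, i.e.\ $r_{j+1}=s_{j+1}$ for every $j$ (no induction is actually needed, since each $j$ is handled independently). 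Combined with Proposition~\ref{T:rankseq}(ii) this yields the similarity. What your route buys is that the whole proposition becomes self-contained modulo Proposition~\ref{T:rankseq}, at the cost of a longer argument than the paper's one-line citation; it also makes transparent, as you note, how the two rank hypotheses in (i) play complementary roles (one controlling the image of the intertwiner, the other its kernel contribution).
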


A proof of (i) is explained in \cite[Exercise 3.2.P20b]{HJ}.  Here are short proofs of (ii) and (iii).  Using normality and the fact that $\rank(T^*T) = \rank(T) = \rank(T^*)$ for any matrix $T$,
\begin{align*} \rank (AB) &= \rank(B^*A^* A B) = \rank(B^*A A^* B) = \rank (A^*B)  = \rank(B^*A) \\ &= \rank (A^* B B^* A) = \rank(A^* B^* B A) = \rank(BA).
\end{align*}
When $A$ and $B$ are Hermitian, we note that $\rank((AB)^j) = \rank(((AB)^j)^*) = \rank((BA)^j)$ for all $j$, then apply Proposition \ref{T:rankseq}(ii).

Actually there is a sort of converse to (iii): a matrix is similar to its adjoint if and only if it is a product of two Hermitian matrices \cite[Theorem 4.1.7]{HJ}.

\begin{remark} \label{T:inf}
Proposition \ref{T:rank}(iii), which motivates the main questions in this paper, is not true for infinite-dimensional Hilbert space operators.  Let $A$ be the diagonal operator on $\ell^2$ whose diagonal is $1, \frac12, \frac13, \dots$, and let $B$ be the projection onto the orthogonal complement of the $\ell^2$ vector $v = (1, \frac12, \frac13, \dots )$.  Then $BA$ is injective since $v$ is not in the range of $A$, but $AB$ has nontrivial kernel, namely $\CC v$.  Thus $AB$ and $BA$ cannot be similar.

\end{remark} 

\section{Unitary similarity}

The reader may wonder about unitary similiarity.

It may not be true that $AB \sim_u BA$ when $A$ and $B$ are Hermitian: take
$$A = \left[ \begin{matrix}  1 & 0 & 0 \\ 0 & 1 & 0 \\ 0 & 0 & 0 \end{matrix} \right], \qquad B=  i \left[ \begin{matrix}  0 & -1 & 1 \\ 1 & 0 & -1 \\ -1 & 1 & 0 \end{matrix} \right].$$  (To verify that the products are not unitarily similar, one can check that $X^* X^2 (X^*)^2 X$ has different traces for $X = AB$ and $X = BA$.)  This is a counterexample of minimal size and rank, as we now show.

\begin{proposition} \label{T:unit}
Let $A,B \in \Mn$ be normal.  Then $AB \sim_u BA$ when (i) $n\leq2$ or (ii) $\rank(A)\leq 1$.
\end{proposition}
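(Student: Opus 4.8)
The plan is to reduce both parts to a single classification fact: a matrix in $\Mn$ with $n \le 2$ is determined up to unitary similarity by its characteristic polynomial together with its Frobenius norm $\|M\|_F = (\tr(M^*M))^{1/2}$. I would prove this from the Schur decomposition: write $M \sim_u \bigl[\begin{smallmatrix} \lambda_1 & c \\ 0 & \lambda_2 \end{smallmatrix}\bigr]$ with the two eigenvalues (read off from the characteristic polynomial) placed on the diagonal \emph{in a prescribed order}. Then $|c|^2 = \|M\|_F^2 - |\lambda_1|^2 - |\lambda_2|^2$ is forced, and the phase of $c$ can be removed by conjugating with the diagonal unitary $\bigl[\begin{smallmatrix} e^{i\theta} & 0 \\ 0 & 1\end{smallmatrix}\bigr]$. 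Hence two such matrices with equal characteristic polynomials and equal Frobenius norms are unitarily similar.

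Two elementary facts about the products then feed this classification. First, $AB$ and $BA$ always share the same characteristic polynomial; this is standard and also follows from the coincidence of their invertible Jordan blocks recalled in Section~3 (together with the fact that both have size $n$). Second, \emph{when $A$ and $B$ are normal}, $\|AB\|_F = \|BA\|_F$: indeed $\|AB\|_F^2 = \tr(B^*A^*AB) = \tr(A^*ABB^*) = \tr(AA^*B^*B) = \tr(A^*B^*BA) = \|BA\|_F^2$, where the middle equalities use cyclicity of the trace together with $A^*A = AA^*$ and $BB^* = B^*B$. Normality enters \emph{only} here.

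For part (i) the products $AB, BA$ lie in $\Mn$ with $n \le 2$, so the two facts above and the classification give $AB \sim_u BA$ at once. For part (ii), note that $\rank(AB), \rank(BA) \le \rank(A) \le 1$, so I would first reduce each rank-$\le 1$ product to a compression of size $\le 2$. For any $M$ of rank $\le 1$, the subspace $V = \range(M) + \range(M^*)$ has dimension $\le 2$, is $M$-invariant, and satisfies $V^\perp \subseteq \ker M$; writing $\CC^n = V \oplus V^\perp$ therefore gives $M \sim_u (M|_V) \oplus 0$, where $M|_V$ has size $\le 2$, trace $\tr M$, and Frobenius norm $\|M\|_F$. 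Applying the two facts and the classification to the compressions of $AB$ and $BA$, and then padding with zeros, yields $AB \sim_u BA$.

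The only genuine content beyond bookkeeping is the $2 \times 2$ classification and the rank-one reduction to a compression; the mild care needed is in fixing the eigenvalue ordering and absorbing the phase of the off-diagonal entry in the Schur form. Neither step is deep, and I do not anticipate a serious obstacle: the crux is simply recognizing that, for these small sizes and ranks, matching characteristic polynomial and Frobenius norm is exactly enough to force unitary similarity.
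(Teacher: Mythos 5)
Your argument is correct. For part (i) it is essentially the paper's proof in different clothing: the invariant you use (characteristic polynomial together with Frobenius norm) is equivalent, for $2\times 2$ matrices, to the complete unitary invariant $(\tr(X), \tr(X^2), \tr(X^*X))$ that the paper cites from Murnaghan, and your computation showing $\tr((AB)^*(AB)) = \tr((BA)^*(BA))$ via normality is the same one; the only addition is that you prove the classification yourself from the Schur form rather than citing it. For part (ii) your route is genuinely different. The paper works directly with the structure of a rank-one normal matrix: after scaling, $A = vv^*$, and an explicit unitary $U$ sending $v \mapsto c^{-1}Bv$ and $c^{-1}B^*v \mapsto v$ (where $c = \|Bv\| = \|B^*v\|$) is verified to intertwine $AB$ and $BA$, with the case $c=0$ handled separately. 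You instead compress each rank-$\le 1$ product $M$ to the at-most-two-dimensional subspace $\range(M)+\range(M^*)$, which is indeed reducing since its orthogonal complement is $\ker(M)\cap\ker(M^*)$, and then feed the compressions back into the $2\times 2$ classification; the compressions share their trace, have vanishing determinant, and have equal Frobenius norms, so the lemma applies. This unifies (i) and (ii) under a single lemma and isolates exactly what is used --- only $\rank(AB),\rank(BA)\le 1$ together with equality of traces and of Frobenius norms, the latter being the sole point where normality enters --- at the cost of the explicit intertwining unitary that the paper's computation produces.
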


\begin{proof}
We discuss only the nontrivial cases $n=2$ and $\rank(A)=1$.

(i) The triple $(\tr (X), \tr (X^2), \tr (X^*X))$ is a complete unitary invariant for $2 \times 2$ matrices \cite{M}.  We use the trace property for $\tr(AB) = \tr(BA)$ and
$$\tr((AB)^2) = \tr(ABAB) = \tr(BABA) = \tr((BA)^2),$$
then mix in normality to obtain
$$\tr((AB)^*(AB)) = \tr(B^*A^*AB) = \tr(A^*ABB^*) = \tr(AA^*B^*B) = \tr(A^*B^*BA) = \tr((BA)^*(BA)).$$

(ii) A rank one normal matrix is a scalar multiple of a rank-one projection, so after scaling we may find a unit vector $v$ such that $A = vv^*$, the projection onto $\CC v$.  By normality we have $\|Bv\| = \|B^*v\| = c$.  If $c=0$ then
$$AB=vv^*B = v(B^*v)^* = v(0)= 0= (0) v^* = Bvv^*= BA$$ and we are done.

If $c \neq 0$, then there is an isometry from $\text{span}\{v, c^{-1} B^*v\}$ onto $\text{span}\{c^{-1}Bv, v\}$ determined by sending $v$ to $c^{-1}Bv$ and $c^{-1} B^*v$ to $v$; these are all unit vectors, and the inner products of the pairs agree:
$$(c^{-1} B^*v)^* v = v^* (c^{-1} Bv).$$
Let $U$ be any unitary matrix that extends this isometry.  Then
$$BAU = Bvv^*U = Bv(U^*v)^* = Bv(c^{-1}B^*v)^* = (c^{-1}Bv)v^*B = (Uv)v^*B = UAB. \qedhere$$
\end{proof}

\begin{remark}
There is a similar pattern for transposes.  It is known that for any matrix $A$, $A \sim A^T$ \cite[Theorem 3.2.3.1]{HJ}.  For $2 \times 2$ matrices and rank 1 matrices we even have unitary similarity by arguments similar to Proposition \ref{T:unit}, and these conditions are sharp: consider 
$$A = \left[ \begin{matrix}  0 & 1 & 0 \\ 0 & 0 & 2 \\ 0 & 0 & 0 \end{matrix} \right].$$
For more on the condition $A \sim_u A^T$, see \cite{GT}.
\end{remark}

\section{Similarity of products of normals}

\begin{proposition}
Let $A,B \in \Mn$ be normal.  Then $AB \sim BA$ if $\rank(A)\leq 2$.
\end{proposition}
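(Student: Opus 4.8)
The plan is to invoke Proposition \ref{T:rankseq}(ii) and reduce the claim to showing that $AB$ and $BA$ have identical rank sequences. Write $a_j = \rank((AB)^j)$ and $b_j = \rank((BA)^j)$. By Section 2 each of these is a nonincreasing convex sequence of nonnegative integers, and we already have several entries pinned down: $a_0 = b_0 = n$; $a_1 = b_1$ by Proposition \ref{T:rank}(ii), with this common value at most $\rank(A) \le 2$; and the two sequences share the same eventual constant by Proposition \ref{T:rankseq}(i). Since every term past the first is at most $2$, a short case check shows that a nonincreasing convex integer sequence with $a_1 \le 2$ is determined by $a_1$, $a_2$, and its eventual constant. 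Thus, given that $a_0,a_1$ and the eventual constants already agree, the entire argument collapses to verifying the single equality $a_2 = b_2$. This is immediate when $a_1 = b_1 \le 1$ (then $a_2$ and $b_2$ are both forced by the common eventual constant: if $a_1=0$ both sequences vanish from index $1$ on, and if $a_1=1$ each is $(n,1,1,\dots)$ or $(n,1,0,\dots)$ according to the eventual constant), so the substance lies entirely in the case $a_1 = b_1 = 2$.

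In that case I would exploit normality of $A$ to localize everything to $V := \range(A)$, which is two-dimensional. Let $P$ be the orthogonal projection onto $V$; normality gives $A = PAP$, with $A$ restricting to an \emph{invertible} operator $\tilde A$ on $V$ and $AP = PA = A$. Since $\rank(AB) = 2 = \dim V$ we get $\range(AB) = V$, and for $v \in V$ a direct computation gives $(AB)v = \tilde A\,(PBP)\,v$, so $AB$ restricted to $V$ equals $\tilde A C$ with $C := PBP|_V$. Then
$$\range((AB)^2) = (AB)(\range(AB)) = (AB)(V) = \tilde A C(V),$$
and invertibility of $\tilde A$ yields $a_2 = \rank(C)$. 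For $BA$, note $\range(BA) = B(V)$ has dimension $\rank(BA) = 2 = \dim V$, so $B$ is injective on $V$; computing
$$\range((BA)^2) = (BA)(B(V)) = B\big(\tilde A C(V)\big)$$
and using this injectivity gives $b_2 = \rank(C)$ as well. Hence $a_2 = b_2$, and together with the matching eventual constants this forces the full rank sequences to coincide.

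The main obstacle is exactly this $j = 2$ identity, since it is the one place where $\rank(AB) = \rank(BA)$ on its own does not suffice: it distinguishes the two possible nilpotent shapes of rank $2$ (one Jordan block of size $3$ versus two blocks of size $2$), equivalently the ambiguity between $a_2 = 1$ and $a_2 = 0$. Normality of $A$ is what makes the compression to $V$ clean—$A$ being invertible on its range is precisely what lets me cancel $\tilde A$—while the hypothesis $\rank(A) \le 2$ keeps $V$ two-dimensional and so keeps the range bookkeeping this simple. I expect both hypotheses to be essential, with larger rank reintroducing the very ambiguity that yields genuine counterexamples.
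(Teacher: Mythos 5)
Your proof is correct, and its skeleton---reduce to rank sequences via Proposition \ref{T:rankseq}(ii), use convexity and monotonicity from Section 2, pin down $a_1=b_1$ via Proposition \ref{T:rank}(ii) and the eventual constant via Proposition \ref{T:rankseq}(i), then dispose of the cases $a_1\le 1$ by inspection---is exactly the paper's. Where you diverge is the crucial case $a_1=b_1=2$: the paper simply observes that $\rank(AB)=\rank(BA)=\rank(A)$ and invokes the known fact Proposition \ref{T:rank}(i), whereas you prove the needed equality $\rank((AB)^2)=\rank((BA)^2)$ by hand, compressing to $V=\range(A)$, using normality of $A$ to get $A=PAP$ with $A|_V$ invertible, and using $\rank(BA)=\dim V$ to get injectivity of $B$ on $V$; both ranks then come out as $\rank(PBP|_V)$, and your observation that a nonincreasing convex integer sequence with $a_1\le 2$ is determined by $a_1$, $a_2$, and its tail closes the argument. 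I checked the computation and it is sound (note $\range(T^2)=T(\range(T))$, $\range(AB)\subseteq V$ with equal dimensions, and $\tilde A\,C(V)\subseteq V$ so the injectivity of $B$ on $V$ applies where you need it). The trade-off: the paper's route is shorter because it outsources the hard step to the cited exercise in \cite{HJ}, while yours is self-contained at that point---in effect you reprove the relevant instance of Proposition \ref{T:rank}(i), and your argument makes visible that only the EP property of $A$ (so that $AP=PA=A$ for $P$ the projection onto $\range(A)$) is really used there, which is in the spirit of the generalizations in Section 6.
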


\begin{proof}
If $\rank(A) \leq 1$, we are done by Proposition \ref{T:unit}(ii), so assume that $\rank(A)=2$.  

Recall from Section 2 that rank sequences are nonincreasing and convex.  Paired with the constraint that $\rank(A)=2$, this leaves only seven possibilities for the rank sequences for $AB$ and $BA$ (although some are impossible for $n=1,2$ or $3$):
\begin{itemize}
\item $n,0, \dots$
\item $n,1,0,\dots$
\item $n,1,1, \dots$
\item $n,2,0,\dots$
\item $n,2,1,0,\dots$
\item $n,2,1,1,\dots$
\item $n,2,2,\dots$
\end{itemize}
The rank sequences for $AB$ and $BA$ have the same second entry by Proposition \ref{T:rank}(ii).  If it is 2, we have similarity by Proposition \ref{T:rank}(i).  If it is 1, then Proposition \ref{T:rankseq}(i) forces the rank sequences to be the same one out of the two possibilities above; if it is 0, there is only one possible rank sequence -- in either case we have similarity by Proposition \ref{T:rankseq}(ii).
\end{proof}

\begin{corollary}
If $A,B \in \mathbb{M}_3$ are normal, then $AB \sim BA$.
\end{corollary}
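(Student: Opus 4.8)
The plan is to reduce to the preceding Proposition by a case analysis on $\rank(A)$. Since $A \in \mathbb{M}_3$, its rank is one of $0, 1, 2, 3$. In the first three cases we have $\rank(A) \leq 2$, so the Proposition applies verbatim and gives $AB \sim BA$ with no further work; the convexity/rank-sequence machinery of Section 2 together with Propositions \ref{T:rankseq} and \ref{T:rank} has already done everything.

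The only remaining case is $\rank(A) = 3$, i.e.\ $A$ invertible. Here I would simply invoke the elementary observation recorded at the start of Section 3: when one factor is invertible the two products are similar, since $AB = A(BA)A^{-1}$, so conjugation by $A$ exhibits $AB \sim BA$. (One could equally well condition on $\rank(B)$ by symmetry, but that is unnecessary.) Combining the two cases completes the proof.

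I expect no genuine obstacle: all the substance lives in the Proposition, and the extra invertible case is handled by a one-line conjugation. The only point worth flagging is that in the invertible case normality of $B$ is never used—$AB \sim BA$ holds for an arbitrary $B$ once $A$ is invertible—so the normality hypotheses are exploited only to the extent the Proposition demands, namely to guarantee $\rank(AB) = \rank(BA)$ (Proposition \ref{T:rank}(ii)) when $\rank(A) \leq 2$.
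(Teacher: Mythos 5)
Your proof is correct and takes essentially the same route as the paper, which disposes of the corollary by noting that if neither matrix is invertible then both have rank at most $2$, so the preceding Proposition applies, while the invertible case is the elementary conjugation from Section 3. Your version, conditioning only on $\rank(A)$, is if anything marginally cleaner, and your remark that normality is not needed in the invertible case is accurate.
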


\begin{proof}
If neither is invertible, both have rank $\leq 2$.
\end{proof}

Thus a minimal counterexample for similarity of product pairs of two normal matrices would be $4 \times 4$ matrices $A$ and $B$ of rank 3.  The rank sequences of $AB$ and $BA$ should be different (Proposition \ref{T:rankseq}(ii)) but must have the same two first terms (Proposition \ref{T:rank}(ii)) and the same limit (Proposition \ref{T:rankseq}(i)).  By Proposition \ref{T:rank}(i) the rank of $AB$ cannot be 3, so the rank sequences are in the list above, and the only possibility is for them to be the fourth and fifth ones.  Such matrices exist!

\begin{example} \label{T:ctrex}
Let
$$A = \left[ \begin{matrix}  0 & 0 & 0 & 1 \\ 0 & 1 & 0 & 0 \\ 0 & 0 & 0 & 0 \\ 1 & 0 & 0 & 0 \end{matrix} \right], \qquad B = \left[ \begin{matrix}  0 & 0 & 0 & 0 \\ 0 & 0 & 0 & 1 \\ 0 & 1 & 0 & 0 \\ 0 & 0 & 1 & 0 \end{matrix} \right].$$  Then 
$$AB =  \left[ \begin{matrix}  0 & 0 & 1 & 0 \\ 0 & 0 & 0 & 1 \\ 0 & 0 & 0 & 0 \\ 0 & 0 & 0 & 0 \end{matrix} \right], \qquad BA = \left[ \begin{matrix}  0 & 0 & 0 & 0 \\ 1 & 0 & 0 & 0 \\ 0 & 1 & 0 & 0 \\ 0 & 0 & 0 & 0 \end{matrix} \right],$$
which satisfy $(AB)^2 = 0 \neq (BA)^2$ and so are not similar.  In fact their rank sequences are the fourth and fifth in the list above.  Thus Zhang's question, as stated in the introduction, has a negative answer using 0-1 matrices and $A$ even Hermitian.  
\end{example}

We conclude this section by exhibiting another class of normal matrices, other than the Hermitians, for which $AB \sim BA$.  For any square matrix $X$, define
$$\Phi(X) = \left[ \begin{matrix}  X & X^* \\ X^* & X \end{matrix} \right].$$

\begin{proposition}
Let $X,Y \in \Mn$.  Then $\Phi(X), \Phi(Y) \in \mathbb{M}_{2n}$ are normal matrices satisfying $\Phi(X)\Phi(Y) \sim \Phi(Y)\Phi(X)$.
\end{proposition}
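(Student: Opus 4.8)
The plan is to find a single unitary that simultaneously block-diagonalizes every matrix in the range of $\Phi$, and thereby reduce both the normality assertion and the similarity assertion to the already-settled Hermitian case (Proposition \ref{T:rank}(iii)). The motivation for the right unitary is that every $\Phi(X)$ commutes with the flip
$$J = \left[ \begin{matrix} 0 & I \\ I & 0 \end{matrix} \right],$$
since the diagonal blocks of $\Phi(X)$ agree (both $X$) and the off-diagonal blocks agree (both $X^*$). Thus the entire range of $\Phi$ is block-diagonalized by the unitary that diagonalizes $J$, namely the Hadamard-type matrix $W = \tfrac{1}{\sqrt{2}} \left[ \begin{smallmatrix} I & I \\ I & -I \end{smallmatrix} \right] \in \mathbb{M}_{2n}$. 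A short block multiplication, in which the cross terms cancel precisely because of this repeated-block structure, gives
$$W^* \Phi(X) W = \left[ \begin{matrix} X + X^* & 0 \\ 0 & X - X^* \end{matrix} \right],$$
and the same $W$ sends $\Phi(Y)$ to $(Y+Y^*) \oplus (Y - Y^*)$.

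Two consequences follow at once. First, $X + X^*$ is Hermitian and $X - X^*$ is skew-Hermitian, so each is normal; their direct sum is therefore normal, and since $W$ is unitary, $\Phi(X)$ is normal. This settles the normality claim with no further computation. Second, because $W$ is unitary, conjugation by $W$ is multiplicative, so
$$W^* \Phi(X)\Phi(Y) W = \left[ \begin{matrix} (X+X^*)(Y+Y^*) & 0 \\ 0 & (X-X^*)(Y-Y^*) \end{matrix} \right],$$
and $W^* \Phi(Y)\Phi(X) W$ is the analogous block-diagonal matrix with the factors reversed. It now suffices to prove the two top-left blocks are similar and the two bottom-right blocks are similar, after which the block-diagonal similarity $S_1 \oplus S_2$ transfers this to the full products, and conjugating back by $W$ yields the claim.

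For the top blocks, $X + X^*$ and $Y + Y^*$ are Hermitian, so $(X+X^*)(Y+Y^*) \sim (Y+Y^*)(X+X^*)$ directly by Proposition \ref{T:rank}(iii). For the bottom blocks I would write $X - X^* = iP$ and $Y - Y^* = iQ$ with $P,Q$ Hermitian; then the two products are $-PQ$ and $-QP$, and since $PQ \sim QP$ by the same proposition, scaling the implementing matrix by $-1$ shows $-PQ \sim -QP$. Assembling the two block similarities completes the argument. The only genuine idea required is spotting the conjugation by $W$ (equivalently, noticing the commutation with $J$); once the range of $\Phi$ is split into its Hermitian and skew-Hermitian parts, everything reduces to routine bookkeeping on top of the Hermitian case, so I do not expect any serious obstacle beyond locating that conjugation.
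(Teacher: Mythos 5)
Your proof is correct and is essentially the paper's own argument: the paper conjugates by the unitary $\frac{1}{\sqrt{2}}\left[\begin{smallmatrix} I & I \\ -I & I\end{smallmatrix}\right]$ to reduce $\Phi(X)$ to $2X_1 \oplus 2iX_2$ (the Hermitian and skew-Hermitian parts, matching your $(X+X^*)\oplus(X-X^*)$ up to the sign convention in the Hadamard matrix) and then applies Proposition \ref{T:rank}(iii) blockwise exactly as you do. No substantive differences.
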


\begin{proof}
Normality of $\Phi(X)$ and $\Phi(Y)$ is a straightforward computation.

Write $X = X_1 + i X_2$ and $Y = Y_1 + iY_2$, where $X_1,X_2,Y_1,Y_2$ are Hermitian.  Let $U \in \mathbb{M}_{2n}$ be the unitary matrix  $$\frac{1}{\sqrt{2}} \left[ \begin{matrix} I_n &  I_n \\ -I_n & I_n \end{matrix} \right].$$  Then
\begin{align*} \Phi(X)\Phi(Y) &\sim_u (U \Phi(X) U^{-1})(U \Phi(Y) U^{-1}) = \left[ \begin{matrix}  2X_1 & 0 \\ 0 & 2iX_2 \end{matrix} \right] \left[ \begin{matrix}  2Y_1 & 0 \\ 0 & 2iY_2 \end{matrix} \right] \\ &= \left[ \begin{matrix}  4X_1 Y_1 & 0 \\ 0 & -4X_2 Y_2 \end{matrix} \right] \sim   \left[ \begin{matrix}  4Y_1 X_1 & 0 \\ 0 & -4Y_2 X_2 \end{matrix} \right] \\ &= \left[ \begin{matrix}  2Y_1 & 0 \\ 0 & 2iY_2 \end{matrix} \right] \left[ \begin{matrix}  2X_1 & 0 \\ 0 & 2iX_2  \end{matrix} \right] = (U \Phi(Y) U^{-1})(U \Phi(X) U^{-1})   \sim_u \Phi(Y)\Phi(X),
\end{align*}
where the middle similarity is the direct sum of similarities obtained by Proposition \ref{T:rank}(iii).
\end{proof} 

\section{Positive semidefinite matrices and a positive result}

In this section we first show that $AB \sim BA$ when $A$ is positive semidefinite and $B$ is normal.  Then we obtain a generalization by noting that the same proof works with significantly weaker conditions on $A$ and $B$.

\begin{theorem} \label{T:pos}
Let $A,B \in \Mn$, where $A$ is positive semidefinite and $B$ is normal.  Then $AB \sim BA$.
\end{theorem}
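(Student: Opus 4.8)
The plan is to invoke the criterion of Proposition \ref{T:rankseq}(ii): since $A$ is Hermitian (hence normal) and $B$ is normal, Proposition \ref{T:rank}(ii) already supplies $\rank(AB)=\rank(BA)$, so it suffices to prove $\rank((AB)^j)=\rank((BA)^j)$ for every $j\geq1$. Positivity enters only through the square root: write $A=P^2$ with $P=A^{1/2}$ positive semidefinite. The one feature that makes positive semidefinite genuinely stronger than Hermitian is the identity $\ker P^2=\ker P$, which yields $\rank(P^2C)=\rank(PC)$ and, on taking adjoints, $\rank(CP^2)=\rank(CP)$ for every matrix $C$. This is exactly what fails for indefinite Hermitian $A$, in keeping with Example \ref{T:ctrex}.

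First I would use these identities to put each rank into a symmetric form. Set $M=PBP=A^{1/2}BA^{1/2}$. Pulling out the leading factor, $(P^2B)^j=P^2\big(B(P^2B)^{j-1}\big)$, so $\rank(P^2C)=\rank(PC)$ gives $\rank((AB)^j)=\rank\big(PB(P^2B)^{j-1}\big)=\rank(M^{j-1}PB)$, the last step being the readily-checked identity $PB(P^2B)^{j-1}=M^{j-1}PB$; symmetrically $\rank(CP^2)=\rank(CP)$ gives $\rank((BA)^j)=\rank(BPM^{j-1})$. Next I would symmetrize with the identity $\rank(T)=\rank(TT^*)=\rank(T^*T)$ together with normality in the form $BB^*=B^*B$. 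Writing $V=(PBB^*P)^{1/2}=(PB^*BP)^{1/2}\geq0$, one computes $\rank(M^{j-1}PB)=\rank\big(M^{j-1}(PBB^*P)(M^*)^{j-1}\big)=\rank(M^{j-1}V)$ and likewise $\rank(BPM^{j-1})=\rank\big((M^*)^{j-1}(PB^*BP)M^{j-1}\big)=\rank(VM^{j-1})$. Thus the theorem collapses to the single assertion $\rank(M^{j-1}V)=\rank(VM^{j-1})$.

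The hard part will be exactly this last equality. It is not an instance of any general principle: $\rank(XV)=\rank(VX)$ is false for arbitrary $X$ and positive semidefinite $V$ (the whole paper is about $\rank(XY)\neq\rank(YX)$), so the argument must exploit that $V$ and $M$ are built from the \emph{same} normal $B$. The normality inputs I would use are $\range B=\range B^*$ together with $\|Bv\|=\|B^*v\|$, the latter giving $\ker(BP)=\ker(B^*P)$ and the fact that $BP$ and $B^*P$ share the single modulus $V$; equivalently, after unitarily diagonalizing $B$ (permissible, as conjugating $A$ and $B$ by one unitary preserves the problem) the claim becomes invariance of the relevant ranks under replacing each eigenvalue $\lambda_k$ of $B$ by $\overline{\lambda_k}$, an invariance forced by the positive semidefinite structure of $A$ rather than by any literal similarity of $B$ to $B^*$. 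I expect the crux to be matching $\dim\big(\range(M^{j-1})\cap\ker V\big)$ with $\dim\big(\range((M^*)^{j-1})\cap\ker V\big)$ through this $B\leftrightarrow B^*$ symmetry; once that is secured the rank sequences agree and $AB\sim BA$ follows at once, and the proof will have used $B$ normal only via $BB^*=B^*B$ and $\range B=\range B^*$, presumably the origin of the promised weaker hypotheses.
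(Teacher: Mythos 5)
Your preparatory reductions are all valid. With $P=A^{1/2}$ and $M=PBP$, the identity $\ker(P^2)=\ker(P)$ does give $\rank((AB)^j)=\rank(M^{j-1}PB)$ and $\rank((BA)^j)=\rank(BPM^{j-1})$, and the passage to $V=(PBB^*P)^{1/2}=(PB^*BP)^{1/2}$ via $\rank(T)=\rank(TT^*)=\rank(T^*T)$ and $BB^*=B^*B$ correctly yields $\rank((AB)^j)=\rank(M^{j-1}V)$ and $\rank((BA)^j)=\rank(VM^{j-1})$. By Proposition \ref{T:rankseq}(ii) the theorem would then follow from
$$\rank(M^{j-1}V)=\rank(VM^{j-1}) \qquad (j\geq 1),$$
but this equality --- which you yourself flag as ``the hard part'' --- is never proved; you only record the expectation that a $B\leftrightarrow B^*$ symmetry will force it. Taking adjoints, the needed statement is $\rank\bigl(M^{j-1}V\bigr)=\rank\bigl((M^*)^{j-1}V\bigr)$, i.e.\ invariance of these ranks under $B\mapsto B^*$, which fixes $V$ but replaces $M=PBP$ by $M^*=PB^*P$. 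No general principle makes a rank invariant under swapping a factor for its adjoint inside a product --- that instability is essentially the subject of the paper, and Example \ref{T:ctrex} shows the corresponding statement fails when $P^2$ is merely Hermitian --- so positivity must be exploited a second time at precisely this point, and no mechanism for doing so is supplied. The reduction repackages the theorem rather than proving it; this is a genuine gap, not a routine omission.

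For comparison, the paper's proof avoids rank sequences altogether: it uses normality of $B$ only to write $B\sim_u C\oplus 0$ with $C$ invertible, uses positive semidefiniteness of $A$ only through the column inclusion property $A_{12}=A_{11}X$, and then exhibits an explicit invertible $S$ with $S(AB)=(BA)S$, showing both products are similar to $CA_{11}C\oplus 0$. If you want to keep your rank-sequence framework, the same block decomposition finishes it quickly: one computes that the nonzero block column of $(AB)^j$ is $\left[\begin{smallmatrix}(A_{11}C)^j\\ X^*(A_{11}C)^j\end{smallmatrix}\right]$ and the nonzero block row of $(BA)^j$ is $[\,(CA_{11})^j \;\; (CA_{11})^jX\,]$, so $\rank((AB)^j)=\rank((A_{11}C)^j)=\rank((CA_{11})^j)=\rank((BA)^j)$ because $C$ is invertible. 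Either way, the working part of the argument is column inclusion, not the operators $M$ and $V$.
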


\begin{proof}
Because $B$ is normal and thus diagonalizable, after simultaneous unitary similarity we may assume that $$A = \left[ \begin{matrix} A_{11} & A_{12} \\ A_{12}^* & A_{22} \end{matrix} \right], \qquad B = \left[ \begin{matrix} C & 0 \\ 0 & 0 \end{matrix} \right],$$
where $C$ is an invertible diagonal matrix in $\mathbb{M}_r$ for some $0 \leq r \leq n$.

We claim that $A_{12} = A_{11} X$ for some $X \in \mathbb{M}_{r,n-r}$ (this is known, but we include the argument for discussion purposes below).  Suppose that $v \in \ker(A_{11}) \subseteq \CC^r$.  Then
$$\left[\begin{matrix} v \\ 0 \end{matrix} \right]^* A  \left[\begin{matrix} v \\ 0 \end{matrix} \right]  = 0,$$
which by positivity of $A$ implies that $\left[ \begin{matrix} v \\ 0 \end{matrix} \right] \in \ker(A)$, so that $v \in \ker(A_{12}^*)$ also.  The condition $\ker(A_{11}) \subseteq \ker(A_{12}^*)$ entails
\begin{equation} \label{E:range}
\range(A_{11}) = [\ker(A_{11})]^\perp \supseteq [\ker(A_{12}^*)]^\perp = \range(A_{12}),
\end{equation}
which implies the desired factorization: $A_{12} = A_{11}X$ for some $X \in \mathbb{M}_{r,n-r}$.

We have
$$A = \left[ \begin{matrix} A_{11} & A_{11}X \\ X^* A_{11} & A_{22}  \end{matrix} \right], \quad AB = \left[ \begin{matrix} A_{11}C & 0 \\ X^*A_{11} C & 0  \end{matrix} \right], \quad BA = \left[ \begin{matrix} CA_{11} & CA_{11}X \\ 0 & 0  \end{matrix} \right].$$

The matrix
$$S = \left[ \begin{matrix} C + XX^* & -X \\ -X^* & I  \end{matrix} \right] = \left[ \begin{matrix} I & -X \\ 0 & I  \end{matrix} \right] \left[ \begin{matrix} C  & 0 \\ 0 & I  \end{matrix} \right] \left[ \begin{matrix} I & 0 \\ -X^* & I  \end{matrix} \right]$$
is invertible and satisfies
\begin{align*}
S(AB) &= \left[ \begin{matrix} C + XX^* & -X \\ -X^* & I  \end{matrix} \right]  \left[ \begin{matrix} A_{11}C & 0 \\ X^*A_{11} C & 0  \end{matrix} \right] =  \left[ \begin{matrix} CA_{11}C & 0 \\ 0 & 0  \end{matrix} \right] \\ &= \left[ \begin{matrix} CA_{11} & CA_{11}X \\ 0 & 0  \end{matrix} \right]  \left[ \begin{matrix} C + XX^* & -X \\ -X^* & I  \end{matrix} \right] = (BA)S. \qedhere
\end{align*}
\end{proof}

Now let us isolate the properties of $A$ and $B$ that are essential to this proof.

Regarding $B$, the important point is unitary similarity to a matrix of the form $C \oplus 0$, where $C$ is invertible.  This is equivalent to requiring that $B$ have the same range as its adjoint; such matrices are called $EP$ or \textit{range Hermitian}.  (The name ``EP" originates in \cite[III.18]{S}, but its meaning as an abbreviation is not fully clear.)  Any normal matrix is EP.

Regarding $A$, we need the factorization $A_{12} = A_{11}X$; this is called the \textit{column inclusion property} for $A$.  In \cite[Observation 7.1.10 and preceding text]{HJ} it is shown that positive semidefinite matrices have the column inclusion property, essentially by the argument above.  The column inclusion property also holds under the weaker assumption that the real part of $A$ is positive semidefinite and has the same rank as $A$ (\cite[Observation 7.1.12]{HJ}).

This leads to the following generalization of Theorem \ref{T:pos}, proved in exactly the same way.  Note that neither $A$ nor $B$ is required to be normal.
 
\begin{theorem} \label{T:final}
Let $A,B \in \Mn$, where the real part of $A$ is positive semidefinite and has the same rank as $A$, and $B$ is $EP$.  Then $AB \sim BA$.
\end{theorem}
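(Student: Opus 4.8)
The plan is to run the proof of Theorem~\ref{T:pos} with only two modifications, dictated by the weaker hypotheses: the reduction of $B$ must now invoke the $EP$ property in place of diagonalizability of a normal matrix, and the bottom-left block $A_{21}$ can no longer be read off from $A_{12}$ because $A$ is not Hermitian. First I would normalize $B$. By definition $B$ is $EP$ exactly when $\range(B)=\range(B^*)$, and this is precisely the condition that $B$ be unitarily similar to $C\oplus 0$ with $C\in\mathbb{M}_r$ invertible. Applying the corresponding simultaneous unitary similarity, I may assume
$$B=\left[\begin{matrix} C & 0 \\ 0 & 0\end{matrix}\right],\qquad A=\left[\begin{matrix} A_{11} & A_{12} \\ A_{21} & A_{22}\end{matrix}\right]$$
in a common $r+(n-r)$ block decomposition. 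Here one must check that the hypothesis on $A$ survives the reduction: conjugation by a unitary sends $\operatorname{Re}(A)$ to $U\operatorname{Re}(A)U^*$ and preserves positive semidefiniteness and every rank, so the real part of the reduced $A$ is again positive semidefinite of rank $\rank(A)$.

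Next I would produce the two factorizations that drive the computation. The column inclusion property (\cite[Observation 7.1.12]{HJ}, quoted above) applies to $A$ and gives $A_{12}=A_{11}X$ for some $X\in\mathbb{M}_{r,n-r}$. The point at which the non-Hermitian case genuinely departs from Theorem~\ref{T:pos} is the block $A_{21}$: I claim $A^*$ also satisfies the hypotheses, since $\operatorname{Re}(A^*)=\operatorname{Re}(A)$ is positive semidefinite and $\rank(A^*)=\rank(A)=\rank(\operatorname{Re}(A))$. Column inclusion applied to $A^*$ then yields $A_{21}^*=A_{11}^*Z$, that is, $A_{21}=YA_{11}$ with $Y=Z^*$. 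In the positive semidefinite setting of Theorem~\ref{T:pos} these two factorizations collapse to one (there $A_{21}=A_{12}^*$, so $Y=X^*$), which is why only the single relation $A_{12}=A_{11}X$ was needed.

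With $A_{12}=A_{11}X$ and $A_{21}=YA_{11}$ in hand I would compute
$$AB=\left[\begin{matrix} A_{11}C & 0 \\ YA_{11}C & 0\end{matrix}\right],\qquad BA=\left[\begin{matrix} CA_{11} & CA_{11}X \\ 0 & 0\end{matrix}\right]$$
and propose the intertwiner
$$S=\left[\begin{matrix} C+XY & -X \\ -Y & I\end{matrix}\right]=\left[\begin{matrix} I & -X \\ 0 & I\end{matrix}\right]\left[\begin{matrix} C & 0 \\ 0 & I\end{matrix}\right]\left[\begin{matrix} I & 0 \\ -Y & I\end{matrix}\right],$$
the natural generalization of the matrix $S$ from Theorem~\ref{T:pos} with $X^*$ replaced by the independent $Y$. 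The factored form exhibits $S$ as a product of invertible matrices, and a direct block computation mirroring the earlier one gives $S(AB)=(CA_{11}C)\oplus 0=(BA)S$, whence $AB\sim BA$.

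The main obstacle is the middle step: recognizing that the lone factorization $A_{12}=A_{11}X$ highlighted in the text controls only the top-right block, and that the correct substitute for Hermitian symmetry is the observation that $A^*$ inherits all the hypotheses, so column inclusion may be applied to it as well to control $A_{21}$. Once both factorizations are secured, the remainder is a transcription of the proof of Theorem~\ref{T:pos}.
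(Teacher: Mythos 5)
Your proof is correct and takes essentially the same approach as the paper, which disposes of this theorem by asserting it is ``proved in exactly the same way'' as Theorem~\ref{T:pos}: the $EP$ hypothesis replaces diagonalizability of $B$ in the reduction to $B = C \oplus 0$ with $C$ invertible, and column inclusion supplies $A_{12}=A_{11}X$. Your extra step---observing that $A^*$ inherits the hypotheses, so column inclusion applied to $A^*$ yields $A_{21}=YA_{11}$, and then replacing $X^*$ by $Y$ in the intertwiner $S$---is a genuinely necessary supplement that the paper leaves implicit, since without Hermitian symmetry the factorization $A_{12}=A_{11}X$ alone does not control the lower-left block of $AB$.
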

 
Theorems \ref{T:pos} and \ref{T:final} fail for infinite-dimensional operators, as demonstrated by the example in Remark \ref{T:inf}.  The reader may wonder where the proof goes wrong, as the range containment in \eqref{E:range} would still guarantee the factorization $A_{12} = A_{11} X$ by Douglas's theorem \cite{D}.  The issue is that ranges need not be closed, and in \eqref{E:range} we can only conclude that 
$$\overline{\range(A_{11})} \supseteq \overline{\range(A_{12})},$$
which does not suffice for the factorization.

\end{document}